\def\udcs{517.518.5} %Здесь автор определяет УДК своей работы
\newtheorem{lemma}{Лемма}
\newtheorem{theorem}{Теорема}
\newtheorem{definition}{Определение}
\newtheorem{proposition}{Предложение}
\begin{document}
УДК \udcs
\thispagestyle{empty}

\title[О равномерных оценках осцилляторных интегралов с гладкой фазой]
{О равномерных оценках осцилляторных интегралов с гладкой фазой}
% Указываем название статьи.
%Сокращенное название указывается в квадратных скобках для колонтитулов,
%если полное название не помещается в строку

\author{ Исроил Акрамович Икромов, Акбар Рахманович Сафаров}
%Указываем авторов
\address{Исроил Акрамович Икромов, %Имя, Отчество, Фамилия первого автора
\newline\hphantom{iii} Акбар Рахманович Сафаров,
% Имя, Отчество, Фамилия второго автора
\newline\hphantom{iii} Институт математики имени В.И.Романовского,% Место работы
\newline\hphantom{iii} ул.Университетский бульвар, 15, % Адрес (улица, дом, строение и т.п.)
\newline\hphantom{iii} 140104, г. Самарканд, Узбекистан}%  Адрес (почтовый индекс, город, страна)
\email{safarov-akbar@mail.ru}% Ваш электронный адрес для переписки

\thanks{\sc Ikromov I.A., Safarov A.R. %  Ф.И.О. авторов на английском языке
Uniform estimates oscillatory integrals with smooth phase}% название статьи на английском языке
\thanks{\copyright \ 2021 Икромов И.А., Сафаров А.Р. }
%\thanks{\rm Работа поддержана РФФИ (грант 03-01-11111)}
\thanks{\it Поступила 1 января 2009 г.}
% (указываем дату отправки, строка будет при получении изменена)

%%%%%%%%%%%%%%%%%%%%%%%%%%%%%%%%%%%%%%%%%%%%%%%%%%%%%%%%%%%%%%%%
\maketitle
{
\small
\begin{quote}
\noindent{\bf Аннотация. } Мы рассмотрим задачу о равномерных оценках осцилляторных интегралов с гладкой фазовой функцией имеющей особенность типа $D_{\infty}$. Оценка являтется точной и являтется аналогом оценок результата В.Н.Карпушкина.
\medskip

\noindent{\bf Abstract. } We consider the problem on uniform estimates for an oscillatory integrals with the smooth phase functions having singularities $D_{\infty} $. The estimate is sharp and analogy to estimates of the work of V.N.Karpushkin.
\medskip

\noindent{\bf Ключевые слова:}{ фаза, деформация, особенность.}
\medskip

 \noindent{\bf Keywords:}  phase, deformation, singularity.
\end{quote}
}

\section{Введение}

\begin{definition}
\textit{Осцилляторным интегралом} с гладкой вещественно-значной фазой $f$ и амплитудой $a$ называется интеграл вида:
\[J(\lambda ,f,a)=\int _{{\bf {\mathbb R}}^{n} }a(x)e^{i\lambda f(x)} dx \]
где $a\in C_{0}^{\infty } ({\bf {\mathbb R}}^{n} )$ и $\lambda \in {\bf {\mathbb R}}$.
\end{definition}

 Пусть $U\subset V\subset \mathbb R^{2} -$ограниченные окрестности начала координат, $\overline{U}(\overline{V})$ замыкание $U(V)$. Допустим, что функция $f:\overline{V}\to \mathbb R$ (где $f\in C^{N} \left(\overline{V}\right),$ $\left(N\ge 8\right)$) имеет следующий вид:
\begin{equation} \label{GrindEQ__1_}
f\left(x_{1} ,x_{2} \right)=x_{1} x_{2}^{2} +g\left(x_{1} ,x_{2} \right),
\end{equation}
где $g\in C^{N} \left(V\right),$ такая, что $D^{\alpha } g(0,0)=0$ для всех $\alpha _{1} +\alpha _{2} \le 3,$ здесь $D^{\alpha } $ означает $D^{\alpha } =\frac{\partial ^{\left|\alpha \right|} }{\partial x_{1}^{\alpha _{1} } \partial x_{2}^{\alpha _{2} } } $, $\alpha :=(\alpha _{1} ,\alpha _{2} )\in Z_{+}^{2}-$мультииндекс, $Z_{+}=\{0\}\cup\mathbb{N}$ неотрицательные целые числа.

\begin{definition} \textit{Пусть $F\in C^{N} \left(\bar{V}\right)$ функция такая, что $\left\| F\right\| _{C^{N} \left(\bar{V}\right)} <\varepsilon $, где $N$ некоторое натуральное число и $\varepsilon $ достаточно малое положительное число, а также }$\left\| F\right\| _{C^{N} \left(\overline{V}\right)} =\mathop{\max }\limits_{\overline{V}} \sum\limits_{\left|\alpha \right|\le N}\left|\frac{\partial ^{\left|\alpha \right|} F\left(x_{1} ,x_{2} \right)}{\partial x_{1}^{\alpha _{1} } \partial x_{2}^{\alpha _{2} } } \right| $\textit{. Функция $f+F$ называется деформацией функции $f$} (см.\cite{Karpushkin1983}).
\end{definition}
 
Пусть $\vartheta _{C^{8} \left(\overline{V}\right)} (\varepsilon ):=\{F\in C^{8}\left(\overline{V}\right),\left\| F\right\| _{C^{8} \left(\overline{V}\right)} <\varepsilon\} $ Основным результатом работы является следующая
\begin{theorem}\label{theorem1} Пусть $f\in C^{8} (\overline{V})$ имеет вид \eqref{GrindEQ__1_}. Тогда найдутся положительные числа $\varepsilon,C$ и окрестность $U\subset V$ начала координат, такие, что для произвольных функций $a\in C_{0}^{1} \left(U\right)$ и $F\in\vartheta _{C^{8} \left(\overline{V}\right)} (\varepsilon )$   справедлива следующая оценка:
\[\left|\int _{U}e^{i\lambda \left(f+F\right)} a\left(x\right)dx \right|\le \frac{C\left\| a\right\| _{C^{1} } }{\left|\lambda \right|^{\frac{1}{2} } } .\]
\end{theorem}
1) Теорема является аналогом более общей теоремы В.Н.Карпушкина \cite{Karpushkin1983} (а также см.\cite{Varch})  для достаточно гладких функций.

2) Если $g\equiv0,$ то оценка, полученная в теореме, неулучшаема.

3) Для некоторых $g$ функция $f$ может иметь особенность типа $D_{k}.$ В этом случае из результатов Дюстермаата \cite{Duistermat} можно вывести более точную оценку.

4) Инвариантные оценки с полиномиальной фазой рассмотрены в работе \cite{Safarov}.

\section{Некоторые вспомогательные утверждения}

Сначала мы приведем несколько простых вспомогательных определений.
\begin{definition}\cite{Arn}
Рассмотрим арифметическое пространство $\mathbb{R}^{n}$ с фиксированными координатами $x_{1}, x_{2},\dots, x_{n}.$  Голоморфная функция $f:(\mathbb{R}^{n},0)\rightarrow(\mathbb{R},0)$ называется квазиоднородной функцией степени $d$ с показателями $\alpha_{1}, \alpha_{2},$ $..., \alpha_{n},$ если при любом $\lambda>0$ имеем $f(\lambda^{\alpha_{1}}x_{1}, \lambda^{\alpha_{2}}x_{2},\dots, \lambda^{\alpha_{n}}x_{n})=\lambda^{d}f(x_{1}, x_{2},..., x_{n}).$ Показатели  $\alpha_{s}$ называются весами переменных $x_{s}.$
\end{definition}

\begin{definition}
Cтандартным градиентным идеалом для голоморфного функции $f:(\mathbb{R}^{n},0)\rightarrow(\mathbb{R},0)$  называется $I_{\nabla f}=\left(\frac{\partial f}{\partial x_{1}}, \frac{\partial f}{\partial x_{2}},...,\frac{\partial f}{\partial x_{n}}\right).$

\end{definition}
 Пусть  функция $f\left(x_{1} ,x_{2} \right)$ удовлетворяет следующим условиям:
\[1)  \frac{\partial ^{\left|\alpha \right|} f\left(0,0\right)}{\partial x_{1}^{\alpha _{1} } \partial x_{2}^{\alpha _{2} } } =0,  \left|\alpha \right|=\alpha _{1} +\alpha _{2} \le 2.\]
2) допустим корни уравнения $f_{3} \left(x_{1} ,x_{2} \right)=0$ на $S_{1} $ (где  $f_{3} $ отрезок Тейлора функции $f$ порядка 3  и $S_{1} $ единичная окружность в $\mathbb R^{2} $ с центром в начале координат) состоят одного простого и двукратного корня.

 Тогда функция $f$, линейным преобразованием, приводится к виду $f\left(x\left(u\right)\right)=u_{1} u_{2}^{2} +g_{1}\left(u_{1} ,u_{2} \right)$, где $g_{1}\left(u_{1} ,u_{2} \right)$ - некоторая функция удовлетворяющая условию $D^{\alpha } g_{1}\left(0,0\right)=0,$ при всех $\left|\alpha \right|\le 3$, аналогичное утверждение доказано для однородных многочленов третьей степени в работе \cite{Arn} (стр.147). Главная часть относительно веса $\left(\frac{1}{3},\frac{1}{3}\right)$ функции $f$ обозначается через $f_{\pi }$. Таким образом, без ограничение общности, мы можем считать, что $f_{\pi } (x_{1} ,x_{2} )=x_{1} x_{2}^{2} $.

\textbf{ }Следуя \cite{Karpushkin1983}  обозначим через $E_{d} $ линейное пространство полиномов степени меньше $d$ относительно веса $\left(\frac{1}{3} ,\frac{1}{3} \right)$ и   \textit{$I_{\nabla f_{\pi } } $   }стандартный градиентный идеал функции $f_{\pi } $.

\begin{definition}Координатное подпространство $B\subset E_{1} $ называется нижным версальным, если $\left(I_{\nabla f_{\pi } } \bigcap E_{1} \right)\oplus B=E_{1} $ (т.е. $I_{\nabla f_{\pi } } \bigcap E_{1} \bigcap B=0,\, \, \left(I_{\nabla f_{\pi } } \bigcap E_{1} \right)+B=E_{1} $)).
\end{definition}
Легко показать, что $B=\left\langle 1,x_{1} ,x_{2} ,x_{1}^{2} \right\rangle$ является версальным подпространством $B$ для функции $f_{\pi } \left(x_{1} ,x_{2} \right)=x_{1} x_{2}^{2}.$

Пусть $\sum\limits_{\frac{m_{1} }{3} +\frac{m_{2} }{3} <d}s_{m} x^{m}  $ отрезок ряда Тейлора в точке 0 функции $F.$ Положим $\pi _{d} (F)=\sum s_{m}  x^{m} ,$ где $0\leq\frac{m_{1} }{3} +\frac{m_{2} }{3} <d$. Таким образом, $\pi _{1} $ определяет отображение пространства $C^{N} \left(V\right)$ на пространство $E_{d},$ где $d\le \frac{N}{3} .$

Следующее предложение о возможности гладко выбрать замену координат является аналогом теоремы версальности. Аналогом деформации функции $f$ является $f+F$, где $F\in \vartheta _{C^{8} \left(\overline{V}\right)} (\varepsilon )$ (деформация с бесконечным числом параметров). Аналогом версальной деформации $f$ является $f+F$, где $F\in \vartheta _{C^{8} \left(\overline{V}\right)} (\varepsilon )$, $\pi _{1} F\in B.$ Здесь $B-$нижнее версальное подпространство \cite{Arn}.

\begin{lemma}\label{lem1}
Пусть $z\in C^{8}(\overline{V})$ некоторая функция. Функция $(f+F)(y+z(y))$ записывается в виде
$$(f+F)(y+z(y))=f(0)+F(0)+\alpha_{10}(z)y_{1}+\alpha_{01}(z)y_{2}+$$$$+\alpha_{20}(z)y_{1}^{2}+\alpha_{11}(z)y_{1}y_{2}+
\alpha_{02}(z)y_{2}^{2}+\sum\limits_{i_{1}+i_{2}=3}\alpha_{i_{1}i_{2}}(z)y_{1}^{i_{1}}y_{2}^{i_{2}}+y_{1}y_{2}^{2},$$
где $\alpha_{10},\alpha_{01}, \alpha_{20}-$функционалы от $F$ и $\alpha_{i_{1}i_{2}}:C^{3}(U)\rightarrow C^{3}(V)$ операторы причем $2\leq i_{1}+i_{2}=3,$ выполняется $\|\alpha_{i_{1}i_{2}}\|_{C^{2}}\leq C\varepsilon,$ при условии $F\in \vartheta _{C^{8} \left(\overline{V}\right)} (\varepsilon ).$
\end{lemma}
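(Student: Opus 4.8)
The plan is to read off the asserted representation from a Taylor--Hadamard expansion of the composite $h(y):=(f+F)(y+z(y))$ to order three, and then to identify the coefficients by the chain rule. Put $\Psi(y)=y+z(y)$; since $f+F\in C^{8}$ and $z\in C^{8}$ with $z(0)=0$, on a small enough neighbourhood $\Psi$ maps into $\overline V$ and $h\in C^{8}$. Taylor's formula with integral remainder then yields
\[
h(y)=\sum_{|\beta|\le 2}\frac{D^{\beta}h(0)}{\beta!}\,y^{\beta}+\sum_{|\gamma|=3}y^{\gamma}\rho_{\gamma}(y),\qquad \rho_{\gamma}(y)=\frac{3}{\gamma!}\int_{0}^{1}(1-t)^{2}D^{\gamma}h(ty)\,dt,
\]
where each $\rho_{\gamma}\in C^{5}(V)\subset C^{3}(V)$. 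This already has the shape claimed in the lemma: the coefficients of degree at most two are constants, and the cubic coefficients are the smooth functions $\rho_{\gamma}$. It remains to identify them and to split off the monomial $y_{1}y_{2}^{2}$.

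I would next compute the low-order coefficients $D^{\beta}h(0)$, $|\beta|\le 2$, by the multivariate chain rule. The decisive observation is that $f$ has no partial derivatives of order $\le 2$ at the origin: since $f=x_{1}x_{2}^{2}+g$, the monomial $x_{1}x_{2}^{2}$ is cubic and $D^{\alpha}g(0)=0$ for $|\alpha|\le 3$, whence $D^{\alpha}f(0)=0$ for all $|\alpha|\le 2$. Because in the chain-rule expression for $D^{\beta}h(0)$ with $|\beta|\le 2$ only derivatives of $f+F$ of order $\le 2$ at the origin occur, and those of positive order reduce to $D^{\alpha}F(0)$, the constant term equals $f(0)+F(0)$ while every coefficient of degree $1$ or $2$ is a functional of $F$ (and of the jet of $z$ at $0$) that vanishes when $F\equiv 0$. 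In particular $\alpha_{10},\alpha_{01},\alpha_{20}$ are functionals of $F$; these are precisely the coordinates of the deformation along the good basis $B=\langle 1,x_{1},x_{2},x_{1}^{2}\rangle$, which explains why these directions are singled out.

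For the cubic part I would isolate the principal term. Composing $x_{1}x_{2}^{2}$ with $\Psi$ gives $(y_{1}+z_{1})(y_{2}+z_{2})^{2}$, whose degree-three Taylor contribution is $y_{1}y_{2}^{2}$ with coefficient exactly $1$ plus terms each carrying a factor of $z$ or its derivatives. Extracting this pure $y_{1}y_{2}^{2}$ from $\sum_{|\gamma|=3}y^{\gamma}\rho_{\gamma}$ and renaming the remaining coefficient functions $\alpha_{i_{1}i_{2}}(z)\in C^{3}(V)$ produces the displayed identity. The surviving cubic coefficients then collect exactly three kinds of contributions: the $z$-part of $x_{1}x_{2}^{2}\circ\Psi$, controlled by $\|z\|_{C^{3}}$; the contribution of $g$, which sits entirely inside the order-$\ge 4$ remainder and hence is made small by the normalizing choice of $z$ bringing $f$ to its principal part $x_{1}x_{2}^{2}$ (equivalently by shrinking $U\subset V$ and using the flatness of $g$); and the contribution of $F\circ\Psi$, controlled by $\|F\|_{C^{8}}<\varepsilon$.

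The main obstacle is the uniform bound $\|\alpha_{i_{1}i_{2}}\|_{C^{2}}\le C\varepsilon$ for $i_{1}+i_{2}=3$. This is not automatic, because a priori the cubic level of $h$ inherits the order-one behaviour of the non-small principal term $x_{1}x_{2}^{2}$. The point to push through is that, once the pure $y_{1}y_{2}^{2}$ has been separated and $g$ has been removed by the normalization, every remaining contribution to $\rho_{\gamma}$ carries either a factor of $z$ (of size $O(\varepsilon)$, as $z$ is the $F$-dependent correction) or a derivative of $F$ (of size $O(\varepsilon)$). Differentiating the integral remainder $\rho_{\gamma}$ up to order two and estimating under the integral sign --- keeping track of how the derivatives of $z$ and of $F$ combine through the chain rule --- then produces the factor $C\varepsilon$. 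I expect this bookkeeping of the $C^{2}$-norms of the composite remainder terms to be the technical heart of the proof.
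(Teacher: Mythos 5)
Your overall route --- Taylor expansion of the composite with integral remainder, identification of the coefficients by the chain rule, and isolation of the monomial $y_{1}y_{2}^{2}$ coming from $(y_{1}+z_{1})(y_{2}+z_{2})^{2}$ --- is the same as the paper's (the paper expands $f+F$ in $x$ first and then substitutes $x=y+z(y)$, which amounts to the same computation). But there is a genuine defect in your treatment of the quadratic coefficients. You normalize $z(0)=0$ and conclude that every coefficient of degree $1$ or $2$ is a functional of $F$ vanishing when $F\equiv 0$. That is exactly what must \emph{not} happen: the point of the lemma, as it is used in Proposition \ref{prop1} and Lemmas \ref{lem2}--\ref{lem3}, is that $\alpha_{02}$ and $\alpha_{11}$ depend on $z$ at leading order, namely $\alpha_{02}(z)=z_{1}+\tilde{\Phi}_{1}(z,F)$ and $\alpha_{11}(z)=2z_{2}+\tilde{\Phi}_{2}(z,F)$ with $\tilde{\Phi}_{j}(z,0)\equiv 0$; it is this dependence that makes the Jacobian in Lemma \ref{lem3} equal to $2\ne 0$ and allows the system \eqref{GrindEQ__2_} to be solved for $z$ by the implicit function theorem. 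Concretely, already for a constant shift $z=(c_{1},c_{2})$ and $F\equiv 0$ one has $(y_{1}+c_{1})(y_{2}+c_{2})^{2}=y_{1}y_{2}^{2}+2c_{2}y_{1}y_{2}+c_{1}y_{2}^{2}+\dots$, so $\alpha_{11}=2c_{2}$ and $\alpha_{02}=c_{1}$ do not vanish. By forcing $z(0)=0$ you discard precisely the degrees of freedom the construction needs.

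Second, your argument for the bound $\|\alpha_{i_{1}i_{2}}\|_{C^{2}}\le C\varepsilon$, $i_{1}+i_{2}=3$, is circular at this stage: you invoke the smallness of $z$ as the $F$-dependent correction of size $O(\varepsilon)$, but in the lemma $z$ is an arbitrary given mapping, and the correction $z(F)$ is only constructed afterwards, in Proposition \ref{prop1}, \emph{using} this lemma. The smallness of the cubic coefficients other than that of $y_{1}y_{2}^{2}$ does not come from $z$ at all: it comes from $\left\|F\right\|_{C^{8}(\overline{V})}<\varepsilon$ together with the flatness $D^{\alpha}g(0)=0$ for $|\alpha|\le 3$, which makes the third-order integral remainders $s_{k_{1}k_{2}}=\int_{0}^{1}(1-u)^{2}\,\partial^{3}(F+g)(ux)\,du$ small after shrinking the neighbourhood. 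Your appeal to removing $g$ by a normalization bringing $f$ to its principal part has no counterpart in the actual proof: $g$ is carried along inside the $s_{k_{1}k_{2}}$ throughout, and only its flatness at the origin is used.
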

\begin{proof}
Представим функцию  $f+F$ в следующем виде:
\[\begin{array}{l} {f+F=s_{10} x_{1} +s_{01} x_{2} +s_{20} x_{1}^{2} +2s_{11} x_{1} x_{2} +s_{02} x_{2}^{2} +s_{30} (x_{1} ,x_{2} )x_{1}^{3} +} \\ {+s_{21} (x_{1} ,x_{2} )x_{1}^{2} x_{2} +(1+s_{12} (x_{1} ,x_{2} ))x_{1} x_{2}^{2} +s_{03} (x_{1} ,x_{2} )x_{2}^{3} ,} \end{array}\]
где $s_{10} =\frac{\partial F(0,0)}{\partial x_{1} } ,\, \, s_{01} =\frac{\partial F(0,0)}{\partial x_{2} } ,\, \, s_{20} =\frac{1}{2} \frac{\partial ^{2} F(0,0)}{\partial x_{1}^{2} } ,\, \, s_{11} =\frac{1}{2} \frac{\partial ^{2} F(0,0)}{\partial x_{1} \partial x_{2} } ,\, \, s_{02} =\frac{1}{2} \frac{\partial ^{2} F(0,0)}{\partial x_{2}^{2} } ,$  $s_{k_{1}k_{2}}(x_{1},x_{2}):=s_{k_{1}k_{2}}=\int _{0}^{1}(1-u)^{2} \frac{\partial ^{3} (F+g)(ux_{1},ux_{2} )}{\partial x_{1}^{k_{1}} \partial x_{2}^{k_{2}}}du,$ $k_{1}+k_{2}=3.$

Сделаем замену $x_{1} -z_{1}(y_{1},y_{2})=y_{1} $, $x_{2} -z_{2}(y_{1},y_{2})=y_{2} $ и используем следующие разложения
$$z_{1}(y)=z_{1}^{0}+a_{10}y_{1}+a_{01}y_{2}+a_{20}y_{1}^{2}+a_{11}y_{1}y_{2}+a_{02}y_{2}+r(y)$$ и
$$z_{2}(y)=z_{2}^{0}+b_{10}y_{1}+b_{01}y_{2}+b_{20}y_{1}^{2}+b_{11}y_{1}y_{2}+b_{02}y_{2}+r(y),$$
где
$z_{1}^{0}=z_{1}(0,0)$, $a_{10}=\frac{\partial z_{1}(0,z)}{\partial y_{1}},$ $a_{20}=\frac{\partial z_{1}(0,z)}{\partial y_{2}},$
$a_{20}=\frac{\partial^{2} z_{1}(0,z)}{\partial y_{1}^{2}},$ $a_{02}=\frac{\partial^{2} z_{1}(0,z)}{\partial y_{2}^{2}},$
$a_{11}=\frac{\partial^{2} z_{1}(0,z)}{\partial y_{1}\partial y_{2}},$ $z_{2}^{0}=z_{2}(0,0)$, $b_{10}=\frac{\partial z_{2}(0,z)}{\partial y_{1}},$ $b_{20}=\frac{\partial z_{2}(0,z)}{\partial y_{2}},$
$b_{20}=\frac{\partial^{2} z_{2}(0,z)}{\partial y_{1}^{2}},$ $b_{02}=\frac{\partial^{2} z_{2}(0,z)}{\partial y_{2}^{2}},$
$b_{11}=\frac{\partial^{2} z_{2}(0,z)}{\partial y_{1}\partial y_{2}}.$

Разложим функцию $(F+g)(y+z(y))$ по формуле Тейлора в точке $(y_{1},y_{2})=(0,0)$

Тогда получим
$$(f+F)(y+z(y))=\alpha_{00}(z)+\alpha_{10}(z)y_{1}+\alpha_{01}(z)y_{2}+$$$$+\alpha_{20}(z)y_{1}^{2}+\alpha_{11}(z)y_{1}y_{2}+
\alpha_{02}(z)y_{2}^{2}+\sum\limits_{i_{1}+i_{2}=3}\alpha_{i_{1}i_{2}}(z)y_{1}^{i_{1}}y_{2}^{i_{2}}+y_{1}y_{2}^{2},$$
где
$\alpha_{00}(z)=f(0)+F(0),\alpha_{10}(z),\alpha_{01}(z),\alpha_{20}(z)$ некоторые функционалы от $z$ и $\alpha_{02}(z),\alpha_{11}(z):C^{3}(U)\rightarrow C^{3}(V)$ операторы имеющие вид:
$$\alpha_{02}(z)=z_{1}+\tilde{\Phi}_{1}(z,F),\,\,\,\alpha_{11}(z)=2z_{2}+\tilde{\Phi}_{2}(z,F),$$
здесь $\tilde{\Phi}_{j}:C^{3}(U)\rightarrow C^{3}(V)$ некоторые гладкие операторы, удовлетворяющие условиям  $\tilde{\Phi}_{j}(z,0)\equiv0,\,\,(j=1,2)$.
\end{proof}

\begin{proposition}\label{prop1}
 Существует положительное число $\varepsilon >0$ такое, что для\textbf{ }любого\textbf{ } $\left\| F\right\| _{C^{4} \left(\overline{V}\right)} <\varepsilon $ найдется такое отображение $(z_{1} ,z_{2} ):=\left(z_{1} (F),z_{2} (F)\right)\in C^{4} \left(U\to \mathbb R^{2} \right)\, ,$ определенное в некоторой окрестности $U$, для которого   справедливо следующее равенство
\[\pi _{1} \left(f\left(y_{1} +z_{1} ,y_{2} +z_{2} \right)+F\left(y_{1} +z_{1} ,y_{2} +z_{2} \right)\right)=\tilde{c}_{0} (F)+\tilde{c}_{1} (F)y_{1} +\tilde{c}_{2} (F)y_{2} +\tilde{c}_{3} (F)y_{_{1} }^{2} ,\]
где\textbf{ }$\pi _{1} (\cdot )-$\textbf{ }проектирование пространства $C^{4} (V)$ на пространство $E_{1} .$\textbf{ }
\end{proposition}
 Теперь  рассмотрим следующие функциональные уравнения относительно $\left(z_{1} ,z_{2} \right):$
\begin{equation} \label{GrindEQ__2_}
\Phi _{1} (y,F,z):=\alpha_{11}(z)=0,\Phi _{2} (y,F,z):=\alpha_{02}(z)=0.
\end{equation}

Приведем вспомогательную лемму.

\begin{lemma}\label{lem2} Для непрерывных операторов $\Phi _{1} (y,F,z)$, $\Phi _{2} (y,F,z)$ в пространстве $U_{1} \times C^{4} (V_{1} )\times U_{2} $ существует частная  производная по $z$ и они дифференцируемы по Фреше в точке 0, где $U_{1} \subset \mathbb R^{2} ,\, U_{2} \subset \mathbb R^{2} .$
\end{lemma}
\begin{proof} Ради определенности, покажем существование  частных  производных по $z$ и  дифференцируемость оператора $\Phi _{1} (y,F,z)$, для функции  $\Phi _{2} (y,F,z)$ доказательство совершенно аналогично.

Так как функции \textit{$F\in C^{8} \left(\bar{V}\right)$ }и  \textit{$g\in C^{8} \left(\bar{V}\right)$} то отсюда вытекает  дифференцируемость оператора $\Phi _{1} (y,F,z)$.

Существование производных отображения $\Phi _{2} (y,F,z)$  рассматривается аналогично.
\end{proof}
\begin{lemma}\label{lem3} Операторы $\Phi _{1} (y,F,z)$, $\Phi _{2} (y,F,z)$ удовлетворяют следующим условиям: 1)$\Phi _{1}(y,0,0)\equiv0,\, \, \Phi _{2}(y,0,0)\equiv0.$ 2) $\left|\begin{array}{l} {\frac{\partial \Phi _{1} }{\partial z_{1} } \, \, \, \, \, \frac{\partial \Phi _{1} }{\partial z_{2} } } \\ {\frac{\partial \Phi _{2} }{\partial z_{1} } \, \, \, \, \frac{\partial \Phi _{2} }{\partial z_{2} } } \end{array}\right|\ne 0.$
\end{lemma}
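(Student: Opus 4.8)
The plan is to read everything off from the normal form established in Lemma~\ref{lem1}. Combining the definitions in \eqref{GrindEQ__2_} with the concluding formulas of that lemma, we have the explicit representations
\[
\Phi_1(y,F,z)=\alpha_{11}(z)=2z_2+\tilde{\Phi}_2(z,F),\qquad
\Phi_2(y,F,z)=\alpha_{02}(z)=z_1+\tilde{\Phi}_1(z,F),
\]
where $\tilde{\Phi}_j:C^3(U)\to C^3(V)$ are the continuous maps satisfying $\tilde{\Phi}_j(z,0)\equiv0$ for $j=1,2$. In particular $\Phi_1$ and $\Phi_2$ do not genuinely depend on $y$, so every assertion reduces to a statement about these two scalar quantities.

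For condition~1) I would simply substitute $F=0$ and $z=0$. Since $\tilde{\Phi}_j(z,0)\equiv0$ for every $z$, in particular $\tilde{\Phi}_1(0,0)=\tilde{\Phi}_2(0,0)=0$, whence $\Phi_1(y,0,0)=0$ and $\Phi_2(y,0,0)=0$.

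For condition~2) I would differentiate the two representations in $z_1$ and $z_2$. The linear parts $2z_2$ and $z_1$ contribute the constant entries, while the correction terms are controlled by the identity $\tilde{\Phi}_j(z,0)\equiv0$: differentiating it in $z$ shows that $\partial\tilde{\Phi}_j/\partial z_k$ vanishes for all $z$ when $F=0$, in particular at $z=0$. Evaluating at $(z,F)=(0,0)$ therefore gives
\[
\begin{pmatrix}\frac{\partial\Phi_1}{\partial z_1}&\frac{\partial\Phi_1}{\partial z_2}\\ \frac{\partial\Phi_2}{\partial z_1}&\frac{\partial\Phi_2}{\partial z_2}\end{pmatrix}=\begin{pmatrix}0&2\\1&0\end{pmatrix},
\]
whose determinant equals $-2\neq0$.

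The argument is essentially bookkeeping, and I do not anticipate a genuine obstacle; the only point that needs care is the differentiability of the correction maps $\tilde{\Phi}_j$ in $z$, which is precisely what Lemma~\ref{lem2} supplies (continuity in $z$ together with continuous differentiability near the origin). Since the entries of the Jacobian depend continuously on $(z,F)$ and its value is $-2$ at the origin, the determinant stays bounded away from zero on a whole neighborhood of $(z,F)=(0,0)$; this neighborhood version of condition~2) is exactly the form needed for the implicit function theorem that produces the deformation $(z_1(F),z_2(F))$ of Proposition~\ref{prop1}.
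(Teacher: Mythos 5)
Your proof is correct and takes essentially the same route as the paper's: both read condition 1) directly off the explicit form of $\Phi_1=\alpha_{11}(z)=2z_2+\tilde\Phi_2(z,F)$ and $\Phi_2=\alpha_{02}(z)=z_1+\tilde\Phi_1(z,F)$ from Lemma~\ref{lem1}, and both evaluate the Jacobian at the origin, where only the linear parts contribute, giving determinant $\pm2\ne0$. The paper merely asserts these two facts in a single line, so your version just supplies the bookkeeping it omits.
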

\begin{proof} Из явного вида операторов $\Phi_{1},\Phi_{2}$ вытекает выполнения соотношения 1)$\Phi _{1}(y,0,0)\equiv0,\, \, \Phi _{2}(y,0,0)\equiv0.$ Очевидно, что $\left|\frac{\partial \Phi _{1} (0)}{\partial z_{1} } \frac{\partial \Phi _{2} (0)}{\partial z_{2} } -\frac{\partial \Phi _{1} (0)}{\partial z_{2} } \frac{\partial \Phi _{2} (0)}{\partial z_{1} } \right|=2\ne 0$ и следовательно выполнено второе утверждение. Лемма \ref{lem3} доказана.

Перейдем к доказательству предложения. Так как операторы $\Phi _{1} (y,F,z)$ и $\Phi _{2} (y,F,z)$, согласно лемм \ref{lem1} и \ref{lem2}, удовлетворяют условиям теоремы о неявных отображениях, то, согласно  этой теореме, найдется  решение $z_{1} =z_{1} \left(y_{1} ,y_{2} ,F(y)\right)$, $z_{2} =z_{2} \left(y_{1} ,y_{2} ,F(y)\right)$ уравнения \eqref{GrindEQ__2_} и они являются гладкими функциями, в зависимости от гладкости отражения $F$.
\end{proof}
\textbf{Замечание 1.} Отметим, что если $F\in C^{3+k} $ и $f\in C^{3+k} $ то $z(y)\in C^{k} $.

\section{О разбиении единицы}

Осцилляторный интеграл оценивается с помощью разбиения единицы.

  Пусть $k=\left(\frac{k_{1} }{3} ,\frac{k_{2} }{3} \right)$ и $\tau >0$ фиксированное число. Рассмотрим отображение

\noindent $\delta _{\tau }^{k} :\mathbb R^{2} \to \mathbb R^{2} $ определенное формулой:
\[\delta _{\tau} (x)=\left(\tau x_{1} ,\tau x_{2} \right).\]
Введем  функцию $\beta (x)$, удовлетворяющую условиям:\\
1) $\beta \in C^{\infty } \left(\mathbb R^{2} \right),$\\
2) $0\le \beta (x)\le 1$ для любого $x\in \mathbb R^{2} $,\\
3) $\beta (x)=\left\{\begin{array}{l} {1,\, \, \, \, \, \text{при}\, \, |x|\le 1,} \\ {0,\, \, \, \, \text{при}\, \, \left|\delta _{2^{-1} } (x)\right|\ge 1} \end{array}\right. $

Существование такой функции доказано в \cite{Vladim} (а также \cite{Sogge}).

\noindent Пусть
$$\chi (x)=\beta (x)-\beta (\delta _{2} (x)).$$

\noindent Основные свойства функции $\chi (x)$содержатся в следующей лемме.
\begin{lemma}\label{lem4}
\textit{Функция $\chi (x)$ удовлетворяет следующим условиям:}

\begin{enumerate}
\item \textit{ Для произвольного фиксированного $x$ справедливо равенство}
\[\beta (\delta _{2^{-\nu_{0}}} (x))+\sum _{\nu =v_{0} }^{\infty }\chi \left(\delta _{2^{-\nu } } (x)\right)=1 .\]

\item \textit{Для произвольного} \textit{$x\ne 0$ существует $\nu _{0} =\nu _{0} (x)$ такое, что при любом $\nu \notin [\nu _{0} ,\nu _{0} +4]$}
\[\chi \left(\delta _{2^{\nu } } (x)\right)=0.\]

\item  \textit{Для произвольного $\nu _{0} $ существует $\varepsilon >0$ такое, что }$\chi \left(\delta _{2^{\nu } } (x)\right)=0$\textit{ при любом $\nu <\nu _{0} $ и $\left|x\right|\ge \varepsilon $.}
\end{enumerate}
\end{lemma}
\noindent Лемма \ref{lem4} доказана в работе  \cite{Sogge}.

\section{Доказательство основного результата}

Так как функция имеет вид $f\left(x_{1} ,x_{2} \right)=x_{1} x_{2}^{2} +g\left(x_{1} ,x_{2} \right)$, то применяя предложение \ref{prop1} для $f+F$ и без ограничение общности мы представим коэффициенты $s_{ij} $ без «шапочки» и получим
\begin{equation} \label{GrindEQ__3_}
f+F=s_{10} y_{1} +s_{01} y_{2} +s_{20} y_{1}^{2} +y_{1} y_{2}^{2} +s_{30} y_{1}^{3} +s_{21} y_{1}^{2} y_{2} +s_{12} y_{1} y_{2}^{2} +s_{03} y_{2}^{3} +R_{4} \left(y_{1} ,y_{2} \right),
\end{equation}
где $R_{4} \left(y_{1} ,y_{2} \right)$ остаточный член. Теперь оценим интеграл $J.$ Сначала введем "квазирасстояние"  $\rho =\left|s_{10} \right|^{\frac{3}{2} } +\left|s_{01} \right|^{\frac{3}{2} } +\left|s_{20} \right|^{3}$ и  в интеграле \eqref{GrindEQ__1_} с фазовой функцией \eqref{GrindEQ__3_} сделаем замену переменных $y_{1} =\rho ^{\frac{1}{3} } \tau _{1} ,\, y_{2} =\rho ^{\frac{1}{3} } \tau _{2} $. Тогда получим:
\[J\left(\lambda \right)=\rho ^{\frac{2}{3} } \int _{R^{2} }a\left(\rho ^{\frac{1}{3} } \tau _{1} ,\rho ^{\frac{1}{3} } \tau _{2} \right)e^{i\lambda \rho\Phi} d\tau , \]
где $\Phi= \frac{s_{10} }{\rho ^{\frac{2}{3} } } \tau _{1} +\frac{s_{01} }{\rho ^{\frac{2}{3} } } \tau _{2} +\frac{s_{20} }{\rho ^{\frac{1}{3} } } \tau _{1}^{2} +\tau _{1} \tau _{2}^{2} +s_{30} \tau _{1}^{3} +s_{21} \tau _{1}^{2} z_{2} +s_{12} \tau _{1} \tau _{2}^{2} +s_{03} \tau _{2}^{3} +\frac{1}{\rho } R_{4} \left(\rho ^{\frac{1}{3} } \tau _{1} ,\rho ^{\frac{1}{3} } \tau _{2} \right).$
Применим лемму \ref{lem4}, т.е. разбиение единицы, для интеграла $J(\lambda )$ и получим разложение в следующем виде:
\[J\left(\lambda \right)=J_{0} (\lambda )+\sum _{k=k_{0} }^{\infty }J_{k} \left(\lambda \right) ,\]
где
\[J_{k} \left(\lambda \right)=\rho ^{\frac{2}{3} } \int _{\mathbb R^{2} }a\left(\rho ^{\frac{1}{3} } \tau _{1} ,\rho ^{\frac{1}{3} } \tau _{2} \right)\chi \left(2^{-\frac{k}{3} } \tau _{1} ,2^{-\frac{k}{3} } \tau _{2} \right)e^{i\lambda \rho \Phi} d\tau , \]
\[J_{0} \left(\lambda \right)=\rho ^{\frac{2}{3} } \int _{\mathbb R^{2} }a\left(\rho ^{\frac{1}{3} } \tau _{1} ,\rho ^{\frac{1}{3} } \tau _{2} \right)\beta_{0} (\delta _{2^{-k_{0}}} (x))e^{i\lambda \rho \Phi} d\tau . \]

Сначала оценим интеграл $J_{k} \left(\lambda \right)$. В этом интеграле $J_{k} \left(\lambda \right)$ сделаем замену переменных $2^{-\frac{k}{3} } \tau _{1} =t_{1} $, $2^{-\frac{k}{3} } \tau _{2} =t_{2} $ и  получим
\[J_{k} \left(\lambda \right)=2^{\frac{2k}{3} } \rho ^{\frac{2}{3} } \int _{\mathbb R^{2} }a\left(2^{\frac{k}{3} } \rho ^{\frac{1}{3} } t_{1} ,2^{\frac{k}{3} } \rho ^{\frac{1}{3} } t_{2} \right)\chi \left(t_{1} ,t_{2} \right)e^{i\lambda 2^{k} \rho \Phi _{k} \left(t,s,\rho \right)} dt ,\]
где фазовая функция имеет  вид
\[\begin{array}{l} {\Phi _{k} \left(t,s,\rho \right)=2^{-\frac{2k}{3} } \sigma _{10} t_{1} +2^{-\frac{2k}{3} } \sigma _{01} t_{2} +2^{-\frac{k}{3} } \sigma _{20} t_{1}^{2} +t_{1} t_{2}^{2} +} \\ {+s_{30} t_{1}^{3} +s_{21} t_{1}^{2} t_{2} +s_{12} t_{1} t_{2}^{2} +s_{03} t_{2}^{3} +2^{-k} \frac{1}{\rho } R_{4} \left(2^{\, \frac{k}{3} } \rho ^{\frac{1}{3} } t_{1} ,2^{\frac{k}{3} } \rho ^{\frac{1}{3} } t_{2} \right)\, ,} \end{array}\]
здесь $\sigma _{10} =\frac{s_{10} }{\rho ^{\frac{2}{3} } } $, $\sigma _{01} =\frac{s_{01} }{\rho ^{\frac{2}{3} } } $, $\sigma _{20} =\frac{s_{20} }{\rho ^{\frac{1}{3} } } $, $R_{4} \left(2^{\, \frac{k}{3} } \rho ^{\frac{1}{3} } t_{1} ,2^{\frac{k}{3} } \rho ^{\frac{1}{3} } t_{2} \right)=\frac{2^{\, \frac{4k}{3} } \rho ^{\frac{4}{3} } }{6} (s_{40} t_{1}^{4} +s_{31} t_{1}^{3} t_{2} +s_{22} t_{1}^{2} t_{2}^{2} +s_{13} t_{1} t_{2}^{3} +s_{04} t_{2}^{4} )$,

\noindent  где $s_{40} \left(t,s,\rho \right)=\frac{1}{6} \int _{0}^{1}(1-u)^{3} \frac{\partial ^{4} \Phi _{k} \left(ut,s,\rho \right)}{\partial t_{1}^{4} } du,\,  $ $s_{31} (t,s,\rho )=\frac{1}{6} \int _{0}^{1}(1-u)^{3} \frac{\partial ^{4} \Phi _{k} \left(ut,s,\rho \right)}{\partial t_{1}^{3} \partial t_{2}^{} } du,\,  $ $s_{22} \left(t,s,\rho \right)=\frac{1}{6} \int _{0}^{1}(1-u)^{3} \frac{\partial ^{4} \Phi _{k} \left(ut,s,\rho \right)}{\partial t_{1}^{2} \partial t_{2}^{2} } du,\,  $$s_{13} \left(t,s,\rho \right)=\frac{1}{6} \int _{0}^{1}(1-u)^{3} \frac{\partial ^{4} \Phi _{k} \left(ut,s,\rho \right)}{\partial t_{1}^{} \partial t_{2}^{3} } du,\,  $$s_{04} \left(t,s,\rho \right)=\frac{1}{6} \int _{0}^{1}(1-u)^{3} \frac{\partial ^{4} \Phi _{k} \left(ut,s,\rho \right)}{\partial t_{2}^{4} } du.\,  $

        Мы можем считать (в зависимости от носителя амплитуды $\chi _{0} $, по лемме \ref{lem4}), что  число $k_{0} $ достаточно большое.

 Сначала рассмотрим случай, когда нет осцилляции. Пусть $\left|2^{k} \lambda \rho \right|\le L$, где $L$ большое фиксированное число. Тогда из тривиальной оценки интеграла получим:
\begin{equation}\label{F4}
\left|J_{k} \right|\le \frac{2^{\, \frac{2k}{3} } \rho ^{\frac{2}{3} } A}{\left|2^{k} \lambda \rho \right|^{\frac{1}{2} } } =\frac{2^{\, \frac{k}{6} } \rho ^{\frac{1}{6} } A}{\left|\lambda \right|^{\frac{1}{2} } } .
\end{equation}

Пусть теперь $\left|2^{k} \lambda \rho \right|> L$ и $k>k_{0}$ достаточно большое число. Тогда, по условию, $\Phi _{k}$ может быть рассмотрена как
малая деформация функции $\tau_{1}\tau_{2}^{2}$, причем $(\tau_{1},\tau_{2})\in D:=$supp$(\chi)=\{\frac{1}{2}\leq|\tau|\leq2\}.$
Очевидно, что если $\tau^{0}\in D$ фиксированная точка и $\tau_{2}^{0}\neq0,$ то эта точка не является критической. Если $\chi^{0}$ срезающая функция (т.е. функция носитель которой находится в достаточно малой окрестности этой точки), то интеграл
$$J_{k}^{\chi^{0}} \left(\lambda \right):=2^{\frac{2k}{3} } \rho ^{\frac{2}{3} } \int _{R^{2} }a\left(2^{\frac{k}{3} } \rho ^{\frac{1}{3} } t_{1} ,2^{\frac{k}{3} } \rho ^{\frac{1}{3} } t_{2} \right)\chi \left(t_{1} ,t_{2} \right)e^{i\lambda 2^{k} \rho \Phi _{k} \left(t,s,\rho \right)}\chi^{0}(t) dt,$$
тривиально оценивается интегрированием по частям и имеет место неравенство \eqref{F4}.

Если $\tau_{2}^{0}=0,$ то $\tau_{1}^{0}\neq0.$ В этом случае, используя лемму Ван дер Корпута \cite{VanDer} (более общее утверждение содержит в \cite{Carberry}), снова имеем оценку вида \eqref{F4}.

Так как на носителе амплитуды $2^{\frac{k}{3} } \rho ^{\frac{1}{3} } <1,$ то
\[\frac{1}{\left|\lambda \right|^{\frac{1}{2} } } \sum _{\left|2^{k} \lambda \rho \right|\le 1}\left|J_{k} \right| \le \frac{c}{\left|\lambda \right|^{\frac{1}{2} } } \sum _{2^{k}\rho \le 1}2^{\, \frac{k}{6} } \rho ^{\frac{1}{6} }  \le \frac{c}{\left|\lambda \right|^{\frac{1}{2} } } .\]

Теперь рассмотрим оценку интеграла $J_{0} (\lambda )$. Рассмотрим следующие случаи для параметров $\sigma $.

Введем квазисферу $\rho(\sigma):=\{\left|\sigma _{10} \right|^{\frac{2}{3} } +\left|\sigma _{01} \right|^{\frac{2}{3} } +\left|\sigma _{20} \right|^{\frac{1}{3} } =1\}$ и рассмотрим фазовую функцию
\[\begin{array}{l} {\Phi _{0} \left(\tau ,\sigma ,\rho \right)=\sigma _{10} \tau _{1} +\sigma _{01} \tau _{2} +\sigma _{20} \tau _{1}^{2} +\tau _{1} \tau _{2}^{2} +s_{30} \tau _{1}^{3} +s_{21} \tau _{1}^{2} \tau _{2} +s_{12} \tau _{1} \tau _{2}^{2} } \\ {+s_{03} \tau _{2}^{3} +\frac{1}{\rho } \frac{\rho ^{\frac{4}{3} } }{6} (s_{40} \tau _{1}^{4} +s_{31} \tau _{1}^{3} \tau _{2} +s_{22} \tau _{1}^{2} \tau _{2}^{2} +s_{13} \tau _{1} \tau _{2}^{3} +s_{04} \tau _{2}^{4} ).} \end{array}\]

Отметим, что на квазисфере $c_{1}\leq\left|\sigma \right|\le c_{2}$, где $c_{1}, c_{2}-$фиксированные положительные числа. Таким образом пространство параметров и $ $supp$\left(\beta(\delta_{2^{-k_{0}}}(\cdot)) \right)$ компактные множества. Пусть, $\sigma =\sigma ^{0} $, $\left|\sigma ^{0} \right|=c$ фиксированный вектор и  $\tau =\tau ^{0}$ фиксированная точка. Тогда $\Phi _{0} \left(\tau ,\sigma ,\rho \right)$- достаточно малая гладкая деформация следующей функции
\[\Phi =\sigma _{10}^{0} \tau _{1} +\sigma _{01}^{0} \tau _{2} +\sigma _{20}^{0} \tau _{1}^{2} +\tau _{1} \tau _{2}^{2} .\]
Если $\frac{\partial \Phi \left(\tau _{1}^{0} ,\tau _{2}^{0} \right)}{\partial \tau _{1} } \ne 0$ или $\frac{\partial \Phi \left(\tau _{1}^{0} ,\tau _{2}^{0} \right)}{\partial \tau _{2} } \ne 0$, то при $\left|\sigma -\sigma _{0} \right|<\varepsilon $  $\left|s_{30} \right|+\left|s_{21} \right|+\left|s_{12} \right|+\left|s_{03} \right|<\varepsilon $ справедлива следующая оценка: $\left|\nabla \Phi _{0} \left(\tau ,\sigma ,s\right)\right|>\delta >0$

\noindent  для некоторого положительного числа $\delta $.

        Применяя формулу интегрирования по частям для интеграла $J_{0}^{\chi } $, получим:
\begin{equation} \label{GrindEQ__3_}
\left|J_{0}^{\chi } \right|\le \frac{c\left\| a\right\| _{C^{1} } }{\left|\lambda \right|^{\frac{2}{3}}} ,
\end{equation}
где
\begin{equation} \label{GrindEQ__4_}
J_{0}^{\chi } \left(\lambda \right)=\int _{R^{2} }\chi \left(\tau \right)a\left(\tau _{1} ,\tau _{2} \right)\chi _{0} \left(\tau _{1} ,\tau _{2} \right)e^{i\lambda \rho \Phi _{0} \left(\tau ,\sigma ,s\right)} d\tau
\end{equation}
и $\chi -$гладкая функция сосредоточенная в достаточно малой окрестности  точки $\tau ^{0} .$  Достаточно рассмотреть случай когда $\tau ^{0}-$критическая точка.

 Так как $\tau ^{0} $- критическая точка, то справедливо следующее равенство:
\[\sigma _{10}^{0} +2\sigma _{20}^{0} \tau _{1}^{0} +2(\tau _{2}^{0})^{2} =0 ,  \sigma _{01}^{0} +2\tau _{1}^{0} \tau _{2}^{0} =0.\]
Для функции $\Phi $ в точке $\left(\tau _{1}^{0} ,\tau _{2}^{0} \right)$ матрица Гессе имеет вид:
\[\left(\begin{array}{l} {2\sigma _{20}^{0} } \\ {2\tau _{2}^{0} } \end{array}\right. \left. \begin{array}{l} {2\tau _{2}^{0} } \\ {2\tau _{1}^{0} } \end{array}\right).\]

Отметим, что это ненулевая матрица, так как если $\sigma _{20}^{0}=0$, то либо $\sigma _{10}^{0}\neq0$, либо $\sigma _{01}^{0}\neq0$, следовательно
$\tau _{1}^{0}\neq0$ или $\tau _{2}^{0}\neq0.$

Таким образом, ранг матрицы Гессе $\left(\begin{array}{l} {2\sigma _{20}^{0} } \\ {2\tau _{2}^{0} } \end{array}\right. $$\left. \begin{array}{l} {2\tau _{2}^{0} } \\ {2\tau _{1}^{0} } \end{array}\right)$ не меньше единицы. Если ранг матрицы равен единице, то, применяя лемму Морса по параметрам, для интеграла $J_{0} $ получим следующую оценку
\[\left|J_{0} \right|\le \frac{c\left\| a\right\| _{C^{1} }\rho^{\frac{1}{6}} }{\left|\lambda \right|^{\frac{1}{2} } } .\]

Наконец, суммируя полученные оценки придем к доказательству теоремы \ref{theorem1}.
Теорема \ref{theorem1} доказана.

В заключении авторы выражает свою глубокую благодарность рецензенту за ценные замечания.

\bigskip

\end{document}